\documentclass[12pt]{amsart}

\textwidth 16cm
\textheight 22cm
\headheight 0.5cm
\evensidemargin 0.3cm
\oddsidemargin 0.2cm

\usepackage{amssymb,amsmath,amsthm, url}
\usepackage{comment}
\usepackage{enumitem}
\usepackage[all]{xy}

\setenumerate{label=(\alph*), font=\normalfont}

\newtheorem{thm}[equation]{Theorem}
 \newtheorem{prop}[equation]{Proposition}
 \newtheorem{lem}[equation]{Lemma}
 \newtheorem{cor}[equation]{Corollary}

 \theoremstyle{definition}
 
 \newtheorem{defn}[equation]{Definition}
 \newtheorem{remark}[equation]{Remark}

\numberwithin{equation}{section}

\newcommand{\bbZ}{{\mathbb{Z}}}

\newcommand{\bbP}{{\mathbb{P}}}
\newcommand{\bbA}{{\mathbb{A}}}
\newcommand{\bbG}{{\mathbb{G}}}
\newcommand{\bbC}{{\mathbb{C}}}

\newcommand{\bbQ}{{\mathbb{Q}}}

\newcommand{\Span}{\operatorname{span}}
\newcommand{\Br}{\operatorname{Br}}

\newcommand{\Mat}{\operatorname{Mat}}
\newcommand{\GL}{\mathrm{GL}}

\newcommand{\PGL}{\mathrm{PGL}}

\newcommand{\Id}{\operatorname{Id}}

\newcommand{\Gr}{\operatorname{Gr}}

\newcommand{\id}{\mathrm{id}}

\newcommand{\Spec}{\operatorname{Spec}}

\newcommand{\Aut}{\operatorname{Aut}}

\marginparwidth 2.5cm

\newcommand{\Sym}{\operatorname{S}}

\newcommand{\Fields}{\mathbf{Fields}}

\newcommand{\Sets}{\mathbf{Sets}}
\newcommand{\moduli}[2]{\overline{M}_{#1,#2}}


%

\DeclareFontEncoding{OT2}{}{} 
\DeclareTextFontCommand{\textcyr}{\fontencoding{OT2}
    \fontfamily{wncyr}\fontseries{m}\fontshape{n}\selectfont}


\author{Mathieu Florence}
\address{Institut de Math\'ematiques de Jussieu, Universit\'e Paris 6,
place Jussieu, 75005 Paris, France}
\email{mathieu.florence@imj-prg.fr}
\thanks{Florence was partially supported by the French National Agency 
(Project GeoLie ANR-15-CE40-0012).}

\author{Zinovy Reichstein}
\address{Department of Mathematics\\University of British Columbia\\ BC, Canada V6T 1Z2}
\email{reichst@math.ubc.ca}
\thanks{Reichstein was partially supported by
National Sciences and Engineering Research Council of
Canada Discovery grant 253424-2017.}

\begin{document}

\title[Forms of moduli spaces]
{The rationality problem for forms of $\moduli{0}{n}$}

\keywords
{Rationality, moduli spaces of marked curves, twisted forms, Galois cohomology,
Brauer group}
\subjclass[2010]{14E08, 14H10, 20G15,  16K50}

\begin{abstract}
Let $X$ be a del Pezzo surface of degree $5$ defined over a field $F$.
A theorem of Yu.~I.~Manin and P.~Swinnerton-Dyer asserts that
every Del Pezzo surface of degree $5$ is rational. In this paper we 
generalize this result as follows. 
Recall that del Pezzo surfaces of degree $5$ over a field $F$
are precisely the $F$-forms of the moduli space $\moduli{0}{5}$
of stable curves of genus $0$ with $5$ marked points.
Suppose $n \geqslant 5$ is an integer,
and $F$ is an infinite field of characteristic $\neq 2$. 
It is easy to see that every twisted $F$-form of  
$\moduli{0}{n}$ is unirational over $F$.  We show that

\smallskip
(a) If $n$ is odd, then every twisted $F$-form of
$\moduli{0}{n}$ is rational over $F$.

\smallskip
(b) If $n$ is even, there exists a field extension $F/k$ and
a twisted $F$-form $X$ of $\moduli{0}{n}$ such that $X$
is not retract rational over $F$.
\end{abstract}

\maketitle

\section{Introduction}

Let $X$ be a del Pezzo surface of degree $5$ defined over a field $F$.
Yu.~I.~Manin~\cite[Theorem 3.15]{manin} showed that if $X$ has an $F$-point, 
then $X$ is rational over $F$.  P.~Swinnerton-Dyer~\cite{sd} then
proved that $X$ always has an $F$-point; for  alternative proofs
of this assertion, see~\cite{sb} and~\cite{skorobogatov}. 
In summary, one obtains the following result, published 
earlier by F.~Enriques~\cite{enriques} (with an incomplete proof). 

\begin{thm} \label{thm1} {\rm (}Enriques, Manin, Swinnerton-Dyer{\rm )}
Every del Pezzo surface of degree $5$ defined over a field $F$
is $F$-rational. Equivalently, every $F$-form of
$\moduli{0}{5}$ is $F$-rational.
\end{thm}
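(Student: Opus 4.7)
My plan is to prove the theorem by combining two separate assertions: \emph{(A)} every $F$-form $X$ of $\moduli{0}{5}$ has an $F$-rational point, and \emph{(B)} any such $X$ with $X(F) \neq \emptyset$ is $F$-rational.

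For (A), I would exploit the moduli interpretation of $X$. Since $\Aut(\moduli{0}{5}_{\overline{F}}) \cong S_5$ (permuting the five marked points), $F$-forms of $\moduli{0}{5}$ are classified by $H^1(F, S_5)$, i.e., by isomorphism classes of \'etale $F$-algebras $L$ of degree $5$. Under this correspondence, the open locus $X^{\circ} \subset X$ parametrising smooth irreducible $5$-pointed curves identifies with
\[
X^{\circ} \;\cong\; (\Res_{L/F}\PP^1)^{\circ} \big/ \PGL_{2,F},
\]
where $(\Res_{L/F}\PP^1)^{\circ}$ is the open subscheme parametrising closed immersions $\Spec(L) \hookrightarrow \PP^1_F$. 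A primitive element $\alpha \in L$ yields a closed immersion $\Spec(L) \hookrightarrow \bbA^1_F \subset \PP^1_F$, hence an $F$-point of $X^{\circ}$. When $F$ is so small that no such primitive element exists (e.g., $F = \bbF_2$ and $L = \bbF_2^5$), one instead exhibits a boundary $F$-point of $X$ corresponding to a reducible stable nodal $5$-pointed curve defined over $F$; the combinatorial flexibility of the boundary strata of $\moduli{0}{5}$ makes this possible in every case.

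For (B), I would follow Manin's argument. Given $P \in X(F)$ in general position, the blow-up $\widetilde{X} := \operatorname{Bl}_P X$ is a del Pezzo surface of degree $4$ carrying an $F$-rational $(-1)$-curve $E$ (the exceptional divisor over $P$). Analysis of the $(-1)$-curves on $\widetilde{X}_{\overline{F}}$ and of the Galois action on them produces a Galois-stable conic bundle structure $\widetilde{X} \to \PP^1_F$ for which $E$ is a section. Any conic bundle with a rational section is $F$-birational to $\PP^1 \times \PP^1$, so $X$ is $F$-rational.

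The main obstacle is (A). Step (B) is essentially a mechanical application of the classical geometry of del Pezzo surfaces of degree $4$ and the elementary theory of conic bundles. Step (A) is the substantive part: over sufficiently large fields the moduli description gives an $F$-point directly, but over small fields one needs a separate analysis at the boundary of $\moduli{0}{5}$. This bifurcation, together with the geometric subtlety of extracting an $F$-point from the Galois action on the ten $(-1)$-curves, is the reason the Enriques-Manin-Swinnerton-Dyer theorem required several independent proofs in the literature.
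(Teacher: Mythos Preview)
Your approach is the classical Manin--Swinnerton-Dyer strategy and is sound in outline, but it is entirely different from what the paper does. The paper does not give a standalone proof of Theorem~\ref{thm1}; it quotes the result as known and only recovers it, under the extra hypotheses that $F$ is infinite and $\Char F \neq 2$, as the special case $n=5$ of Theorem~\ref{thm.main}(a). The paper's argument is invariant-theoretic: the form of $\moduli{0}{5}$ attached to an \'etale algebra $E/F$ of degree $5$ is identified birationally with the quotient $\bbA(F^2 \otimes_F E)/G(E/F)$, where $G(E/F) = (\GL_2 \times R_{E/F}(\bbG_m))/\bbG_m$; after passing to $\Gr(2,E)/R_{E/F}^0(\bbG_m)$, rationality is proved by constructing an explicit $R_{E/F}^0(\bbG_m)$-equivariant dominant rational map $f_W \colon \Gr(2,E) \dasharrow \check{\bbP}(E)$ whose generic fibre is an open subvariety of a smaller Grassmannian $\Gr(2,Z)$. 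No rational point is ever located, and no blow-up or conic bundle enters.

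What each approach buys: your classical route yields the theorem over every field $F$ but is tailored to $n=5$; the paper's invariant-theoretic route sacrifices finite fields and characteristic $2$ but gives a uniform argument for all odd $n \geqslant 5$. One caveat on your sketch of (B): the existence of a Galois-stable conic bundle structure on $\widetilde{X}$ having $E$ as a section does not follow formally from the mere presence of a rational $(-1)$-curve; you would need to analyse the Galois action on the sixteen lines (or on the five conic-bundle pencils) to justify it, or alternatively replace this step by one of the shorter published arguments such as Shepherd-Barron's projection.
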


The purpose of this paper is to generalize this celebrated theorem
as follows.  
As usual, we will denote the 
moduli space of smooth (respectively, stable) curves of genus $g$
with $n$ marked points by $M_{g, n}$ (respectively, $\moduli{g}{n}$). 
Recall that these moduli spaces are defined
over the prime field.
A {\em form} of a scheme $X$ defined over a field $F$ is 
an $F$-scheme $Y$, such that $X$ and $Y$ become isomorphic
over the separable closure $F^{sep}$. We will use the terms
``form", ``$F$-form" and ``twisted form"
interchangeably throughout this paper. For a discussion 
of this notion and further references, see Section~\ref{sect.prel-moduli}.

We now recall that $\moduli{0}{5}$ is a split del Pezzo surface 
of degree $5$, and $F$-forms of $\moduli{0}{5}$ are precisely 
the del Pezzo surfaces of degree $5$ defined over $F$. 
The main result of this paper is Theorem~\ref{thm.main} below.

\begin{thm} \label{thm.main} Let $n \geqslant 5$ be an integer
and $F$ be an infinite field of characteristic $\neq 2$.

\smallskip
(a) Assume $n$ is odd. Then every $F$-form of $\moduli{0}{n}$
is rational over $F$.

\smallskip
(b) Assume $n$ is even. If $\Br_2(F) \neq 0$, then there exists 
an $F$-form $X$ of $\moduli{0}{n}$ such that $X$ is not retract 
rational over $F$.
\end{thm}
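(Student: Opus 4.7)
The plan is to exhibit a specific $F$-form $X = X_L$ of $\moduli{0}{n}$ carrying a nontrivial unramified Brauer class that does not come from $\Br(F)$; this rules out retract rationality by the standard criterion that any retract rational $F$-variety $V$ satisfies $\Br_{\mathrm{nr}}(F(V)/F) = \Br(F)|_{F(V)}$. Fix a nonzero $\alpha \in \Br_2(F)$, represented by a quaternion $F$-algebra $Q$ with associated conic $C = \mathrm{SB}(Q)$, and a quadratic subfield $K \subset Q$. Let $L$ be an étale $F$-algebra of degree $n$ containing $K$ as an $F$-subalgebra --- for example $L = K^{n/2}$ (a product of $n/2$ copies of $K$). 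The corresponding form $X := X_L$ of $\moduli{0}{n}$ is birationally $R_{L/F}(\PS^1_L)/\PGL_2$, and since $L$ splits $Q$ we also have the equivalent description $X \sim R_{L/F}(C_L)/\PGL_1(Q)$.

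The key invariant is the Brauer class $\gamma := [\mathcal{C}_\eta] \in \Br_2(F(X))$ of the generic fiber of the universal stable family $\pi : \mathcal{C} \to X$. The universal $L$-marking equips $\mathcal{C}_\eta$ with an $(L \otimes_F F(X))$-point, so $\gamma$ is split by $L \otimes_F F(X)$. When $n$ is odd, some component of $L \otimes_F F(X)$ has odd degree over $F(X)$, so Springer's theorem forces $\gamma = 0$; this vanishing is in fact the mechanism behind Part (a). For $n$ even no such automatic vanishing is available, and we proceed to exploit this.

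To conclude Part (b) we verify two claims. (i) The class $\gamma$ is unramified on $X$: each boundary divisor $D_S$ of the smooth projective model $X$ corresponds to a Galois-stable partition $\Spec(L) = S \sqcup S^c$ with $|S|, |S^c| \geq 2$; the degeneration of $\mathcal{C}_\eta$ along $D_S$ is a nodal union of two copies of $\PS^1$ labelled by $S$ and $S^c$, each defined over $F$, so the associated double cover branching the conic bundle is trivial and the residue of $\gamma$ at $D_S$ vanishes. (ii) The class $\gamma$ is non-constant: since $L$ splits both $0$ and $\alpha$ in $\Br_2(F)$, both $\PS^1_F$ and $C$ admit generic $L$-configurations over $F$ (using that $F$ is infinite), giving two $F$-points $x_0, x_1 \in X(F)$ with $\gamma|_{x_0} = 0$ and $\gamma|_{x_1} = \alpha$; since $\alpha \neq 0$, $\gamma$ does not lie in the image of $\Br(F) \to \Br(F(X))$. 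Combined, (i) and (ii) exhibit a nonzero class in $\Br_{\mathrm{nr}}(F(X)/F)/\Br(F)$, so $X$ is not retract rational.

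The main obstacle is step (i), the explicit verification of unramifiedness along every boundary divisor of the twisted form $X_L$. This requires describing the boundary stratification of $X_L$ in terms of Galois-stable partitions of $\Spec(L)$ and performing a residue computation for the associated conic bundle, although the conceptual picture --- labelled branches on nodal degenerations yielding split double covers --- is clean.
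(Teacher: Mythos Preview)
Your unramified Brauer group approach is quite different from the paper's, which instead shows that a specific $G(E/F)$-torsor over $F$ fails to be $r$-trivial by a direct Faddeev exact sequence computation for $\Br(F(t))$. Your strategy is natural, but step~(i) is not merely ``the main obstacle'': for the \'etale algebra $L=K^{n/2}$ you propose, it is actually false.

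The gap is the assertion that at every boundary divisor $D_S$ the two branches of the nodal degeneration are ``each defined over $F$''. A Galois-stable \emph{unordered} partition $\{S,S^c\}$ of $\Spec(L)$ need not have $S$ and $S^c$ individually Galois-stable; Galois may exchange them. For $L=K^{n/2}$ take $S$ to contain exactly one element of each $\Gal(K/F)$-orbit; then $|S|=n/2\geqslant 3$, the partition $\{S,S^c\}$ is Galois-stable, and the nontrivial element of $\Gal(K/F)$ swaps the two components of the nodal special fiber over the generic point of $D_S$. Since the total space of the universal curve is smooth, the family is \'etale-locally the conic $W^2-aU^2=tZ^2$ with $t$ a uniformizer for $D_S$ and $K=F(\sqrt{a})$, giving $\partial_{D_S}(\gamma)=[a]\in H^1(\kappa(D_S),\bbZ/2\bbZ)$. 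The divisor $D_S$ is geometrically irreducible, so $F$ is algebraically closed in $\kappa(D_S)$ and $[a]\neq 0$ there. Thus $\gamma\notin\Br_{\mathrm{nr}}(F(X))$ and the obstruction you want to invoke is simply not present.

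This is not just a bad example: with a single quadratic $K$ you cannot simultaneously avoid such swap divisors and keep claim~(ii), since every field factor of $L$ must split $Q$ and hence cannot be $F$ itself. The paper's choice $E=E_1^{n_1}\times E_2^{n_2}\times E_3^{n_3}$, with $E_i=F(\sqrt{a_i})$ and $a_3=a_1a_2$, does avoid the problem: each nontrivial element of the biquadratic Galois group fixes all geometric points over one of the $E_i$ and therefore cannot send any $S$ to its complement. Your unramified-Brauer strategy may well succeed for that $E$ (or already for a product of two distinct quadratic splitting fields of $Q$), but this requires changing $L$ and carrying out a genuine case analysis of the boundary, which the proposal does not do.
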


Several remarks are in order.

(1) If $F$ is assumed to be an infinite field of characteristic different from $2$, 
Theorem~\ref{thm1} is recovered from Theorem~\ref{thm.main}(a) by setting $n = 5$. 

(2) In part (b), $\Br_2$ denotes the $2$-torsion of the Brauer group. 
The condition that $\Br_2(F) \neq 0$ is fairly mild; 
it is equivalent to the existence
of a non-split quaternion algebra over $F$. In particular, 
$\Br_2(F) \neq 0$ if $F = k(t_1, t_2)$ 
where $k$ is an arbitrary field of characteristic $\neq 2$ 
and $t_1, t_2$ are independent variables. 

(3) The assumption that $F$ is infinite is only used in part (a); 
see Section~\ref{sect.main-a}. In part (b) it is automatic. Indeed, by a theorem of
J.~Wedderburn, $\Br_2(F) = (0)$ for any finite field $F$; see e.g.,~\cite[Remark 6.2.7]{gs}.

(4) For $n \geqslant 5$, all $F$-forms of $\moduli{0}{n}$ are unirational over $F$;
see~\cite[Theorem 6.1]{dr} or Proposition~\ref{prop.Noether}(a) below.

(5) It is natural to ask if similar rationality results hold for forms
of $\moduli{g}{n}$ for $g \geqslant 1$. Theorems of J.~Harris 
D.~Mumford, D.~Eisenbud and 
G.~Farkas~\cite{harris-mumford, eisenbud-harris,
farkas1, farkas2}, assert that
$M_{g,0}$ is not unirational for any $g \geqslant 23$, and
hence, neither is $M_{g,n}$ for any $n \geqslant 0$.  Moreover, 
A. Logan~\cite{logan} exhibited an explicit integer $f(g)$, for each
$1 \leqslant g \leqslant 22$ such that $M_{g, n}$ is 
not unirational as long as $n \geqslant f(g)$. Deciding
for which of the finitely many remaining pairs $(g, n)$ 
the moduli space $M_{g, n}$ is rational, stably rational or 
unirational, over $\mathbb C$ is a problem of ongoing interest; 
see, e.g.,~\cite{cf}. We have shown that in some cases 
(for small $n, g \geqslant 1$), every form of $\moduli{g}{n}$ 
is stably rational. We plan to publish these results in
a forthcoming paper~\cite{flor}.

The remainder of this paper will be devoted to proving Theorem~\ref{thm.main}.

\section{Preliminaries on moduli spaces of curves and their
twisted forms}
\label{sect.prel-moduli}

The $F$-forms of a quasi-projective variety
$X$ are in a natural bijective correspondence
with $H^1(F, \Aut(X))$; see~\cite[II.1.3]{serre-gc}.
Here $\Aut(X)$ is a functor which associates
to the scheme $S/F$ the abstract group $\Aut(X_S)$. This functor is
not representable by an algebraic group defined over $F$ in general.
If it is, one usually says that $\Aut(X)$ is an algebraic group.
In the case, where $\Aut(X)$ is an algebraic group,
the bijective correspondence between $H^1(F, \Aut(X))$ and the set
of $F$-forms of $X$ (up to $F$-isomorphism)
can be described explicitly by using the twisting operation.
That is, to an $\Aut(X)$-torsor $\tau \colon Y \to \Spec(F)$,
we associate the $F$-variety $^\tau X := (X \times Y)/\Aut(X)$,
which is a twisted form of $X$.
Up to $F$-isomorphism,  $^\tau X$ depends only on the class
$\alpha$ of $\tau$ in $H^1(F, \Aut(X))$; see~\cite[Section III.1.3]{serre-gc}.
By abuse of notation, we will sometimes write $^\alpha X$ 
in place of $^\tau X$.  For the definition and basic properties 
of the twisting operation
we refer the reader to~\cite[Section 2]{florence} or~\cite[Section 3]{dr}.
Conversely, to a twisted form
$X'$ of $X$ defined over $F$, we associate the $\Aut(X)$-torsor
$\operatorname{Isom}_{F}(X, X') \to \Spec(F)$. 

The following recent result is the starting point for our investigation.

\begin{thm} \label{thm.massarenti}
Let $F$ be a field of characteristic $\neq 2$.
If $2g + n \geqslant 5$, then the natural embedding
$\Sym_n \to \Aut_F(\moduli{g}{n})$ is an isomorphism.
\end{thm}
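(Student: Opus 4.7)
The natural map $\Sym_n \hookrightarrow \Aut_F(\moduli{g}{n})$ sends a permutation to the automorphism relabeling the marked points; its injectivity is immediate from the action on a generic configuration, so only surjectivity requires work. Since $\moduli{g}{n}$ is defined over the prime field and the $\Sym_n$-action is Galois equivariant, Galois descent reduces the problem to showing that $\Aut(\moduli{g}{n} \otimes_F F^{sep}) = \Sym_n$. I therefore assume $F$ is algebraically closed for the remainder of the argument.

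The main tool is the combinatorial structure of the boundary $\partial\moduli{g}{n} = \moduli{g}{n} \setminus M_{g,n}$. Its irreducible components are the divisor $\delta_{\mathrm{irr}}$ (closure of the locus of irreducible nodal curves) and the divisors $\delta_{h, I}$ indexed by pairs $(h, I)$ with $0 \leqslant h \leqslant g$ and $I \subseteq \{1, \ldots, n\}$ satisfying the usual stability conditions; each $\delta_{h, I}$ admits a finite surjection from $\moduli{h}{|I|+1} \times \moduli{g-h}{n-|I|+1}$. Since $M_{g,n}$ is intrinsically characterized as the locus parametrizing smooth curves, any automorphism $\phi$ must preserve it and thus permute the boundary components. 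The combinatorics of how these components intersect (the stratification by stable graphs), together with their dimensions and the genera of the factors in their product descriptions, pins down the pair $(h, I)$ up to the involution $(h, I) \leftrightarrow (g - h, I^c)$. This determines a permutation $\sigma \in \Sym_n$ induced by $\phi$, and after replacing $\phi$ by $\sigma^{-1} \circ \phi$ I may assume $\phi$ fixes every $\delta_{h, I}$ setwise.

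An induction on $2g + n$, applied to the product decompositions of boundary components, then shows that $\phi$ restricts to the identity on each $\delta_{h, I}$; compatibility with the gluing on codimension-two boundary strata rules out residual nontrivial symmetries. Finally, a rigidity argument using that $\phi$ fixes sufficiently many divisor classes in $\Pic(\moduli{g}{n})$ and that $\partial \moduli{g}{n}$ supports an ample divisor concludes $\phi = \id$ globally. I expect the main obstacle to be the careful treatment of the induction base cases (which is where the hypothesis $2g + n \geqslant 5$ enters) and the residual $\mathbb{Z}/2$-symmetries on self-gluing boundary components where $(h, I) = (g-h, I^c)$ --- ruling these out is where the assumption $\operatorname{char} F \neq 2$ most naturally appears.
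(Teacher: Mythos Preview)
The paper does not prove this theorem. It is quoted as a known result and attributed to Bruno--Mella \cite{bruno-mella} (for $g=0$ over $\mathbb{C}$), Massarenti \cite{massarenti} (for arbitrary $g$ over $\mathbb{C}$), and Fantechi--Massarenti \cite{fm} in the generality stated. There is therefore no in-paper argument to compare your proposal against; the theorem functions here purely as a black box feeding into Corollary~\ref{cor.forms}.

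Your sketch is broadly in the spirit of the cited proofs, which do analyze how an automorphism acts on the boundary stratification, but several of your steps are genuine gaps rather than routine verifications. The most serious is the claim that ``$M_{g,n}$ is intrinsically characterized as the locus parametrizing smooth curves, so any automorphism $\phi$ must preserve it.'' This is circular: an abstract automorphism of the scheme $\moduli{g}{n}$ has no a priori access to the modular interpretation, and showing that it must nevertheless preserve the boundary is exactly the first nontrivial step. In the literature this is done by characterizing the boundary intrinsically (via the log-canonical class, the rigidity of the moduli stack, or the fibrations coming from forgetful maps), not by appeal to what the points ``parametrize.'' Likewise, the final ``rigidity argument using that $\phi$ fixes sufficiently many divisor classes \dots\ concludes $\phi=\id$'' is where most of the actual work in \cite{massarenti} and \cite{fm} lies; it is not a short coda but the core of the proof, and your outline does not indicate how it would go. What you have written is a plausible table of contents for such an argument, not a proof.
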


In the case $g = 0$ and $F = \bbC$, Theorem~\ref{thm.massarenti}
was proved by A.~Bruno and M.~Mella~\cite{bruno-mella}. In the more
general situation, where $F = \mathbb C$ but $g \geqslant 0$ is arbitrary,
it is due to A.~Massarenti~\cite{massarenti}, and in full generality 
to B.~Fantechi and A.~Massarenti~\cite[Theorem A.2 and Remark A.4]{fm}.
As an immediate consequence, we obtain the following.

\begin{cor} \label{cor.forms} Let $F$ be a field of characteristic $\neq 2$,
and $g, n$ be non-negative integers such that $2g + n \geqslant 5$.
Then every $F$-form of $\moduli{g}{n}$ is isomorphic to
${\,}^{\alpha} \moduli{g}{n}$ for some $\alpha \in H^1(F, \Sym_n)$.
\qed
\end{cor}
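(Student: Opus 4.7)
The plan is to combine Theorem~\ref{thm.massarenti} with the general parametrization of twisted forms recalled at the start of Section~\ref{sect.prel-moduli}. First, by that discussion, the set of $F$-isomorphism classes of $F$-forms of $\moduli{g}{n}$ is in natural bijection with $H^1(F, \Aut(\moduli{g}{n}))$, where $\Aut(\moduli{g}{n})$ denotes the automorphism functor on $F$-schemes sending $S/F$ to $\Aut(\moduli{g}{n} \times_F S)$. Second, under the hypothesis $2g + n \geqslant 5$, Theorem~\ref{thm.massarenti} identifies this functor with the finite constant group scheme $\Sym_n$ via the natural embedding induced by permuting the marked points. Substituting this identification gives $H^1(F, \Sym_n)$.

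Since $\Sym_n$ is a finite constant (hence algebraic, even étale) group scheme over $F$, the explicit form of the parametrization discussed at the start of the section applies: to a class $\alpha \in H^1(F, \Sym_n)$ represented by an $\Sym_n$-torsor $\tau \colon Y \to \Spec(F)$, the associated $F$-form is $^\alpha \moduli{g}{n} = (\moduli{g}{n} \times Y)/\Sym_n$, and every $F$-form of $\moduli{g}{n}$ is $F$-isomorphic to one of this shape. The only thing one needs to be mildly careful about is that Theorem~\ref{thm.massarenti} is an isomorphism of functors (equivalently, holds after arbitrary base change, in particular over $F^{sep}$), so that the identification of $H^1$'s is legitimate; but this is already built into the statement. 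There is no real obstacle here, which is why the assertion is marked \qed in the text --- the content lies entirely in Theorem~\ref{thm.massarenti}.
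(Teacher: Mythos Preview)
Your proof is correct and follows exactly the paper's reasoning: the corollary is stated as an immediate consequence of Theorem~\ref{thm.massarenti} together with the general bijection between $F$-forms and $H^1(F,\Aut(X))$ recalled at the beginning of Section~\ref{sect.prel-moduli}, which is why it is marked \qed without further argument. Your remark that one needs $\Aut(\moduli{g}{n})\simeq\Sym_n$ as a functor (not merely over $F$) is the right point of care, and is indeed part of what Theorem~\ref{thm.massarenti} provides.
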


\begin{remark} 
If $\Sym_n$ is the full automorphism group of the moduli space $M_{0, n}$ 
of smooth marked curves, then $\moduli{0}{n}$ can be replaced 
by $M_{0, n}$ in the statement of Theorems~\ref{thm.main}.
The proof remains unchanged.
In particular, by~\cite[Section 4.10, Corollary 7]{lin}, this is the case
if $F = \mathbb C$. 
\end{remark}

\begin{remark} \label{rem.functors}
Recall that $\moduli{g}{n}$ is, by definition,
the coarse moduli space of the functor which assigns to a scheme
$X$, defined over $F$,
the set of isomorphism classes of pairs $(C, s)$,
where $C \to X$ is a stable curve of genus $g$ over $X$ and
$s = (s_1, \dots, s_n)$ is an $n$-tuple of disjoint sections
$s_i \colon X \to C$. Equivalently, we may view $s$ as a single
closed embedding $s \colon X^{(n)} \hookrightarrow  C$
(over $X$), where $X^{(n)} = X \times_{\Spec(F)} \Spec(F^n)$
is the disjoint union of $n$ copies of $X$.
To place our results into the context of moduli theory,
we remark that if $2g + n \geq 5$, then every form of $\moduli{g}{n}$
admits a similar functorial interpretation.
Suppose $\alpha \colon Y \to \Spec(F)$ is an $\Sym_n$-torsor 
represented by an $n$-dimensional \'etale algebra $E/F$.
Then ${\,}^{\alpha} \moduli{g}{n}$ is the coarse moduli space for the functor
\[X \mapsto \{\text{isomorphism classes of pairs $(C, s)$} \}, \]
where $C \to X$ is a stable curve of genus $g$, and
$s$ is an embedding $X \times_{\Spec(F)} \Spec(E) \to C$ (over $X$).
We will not use this functorial description of
${\,}^{\alpha} \moduli{g}{n}$ in the sequel.
\end{remark}

\section{Preliminaries on the Noether problem}

Let $G$ be a linear algebraic group, and
$G \to \GL(V)$ be a finite-dimensional
representation of $G$, both defined over a field $F$.
We will assume that this representation is generically free, i.e.,
there is a dense open subset $U \subset V$ such that the
scheme-theoretic stabilizer of every point of $U$ is trivial.

The following questions originated in the work of E.~Noether.
Here (R) stands for rationality, (SR) for stable rationality
and (RR) for retract rationality.

\smallskip
\noindent
Noether's problem (R): Is $F(V)^G$ rational over $F$?

\smallskip
\noindent
Noether's problem (SR): Is $F(V)^G$
stably rational over $F$? That is, is there a field $E/F(V)^G$
such that $E$ is rational over both $F(V)^G$ and $F$?

\smallskip
\noindent
Noether's Problem (RR):
Is $F(V)^G$ retract rational over $F$?

\smallskip
Recall that an irreducible variety $Y$ defined over $F$ is called
{\em retract rational} if the identity map $Y \to Y$ factors through
the affine space $\bbA_F^n$ for some $n \geqslant 1$:
\begin{equation} \label{e.retract}
\xymatrix{ Y  \ar@{->}[rr]^{\id} \ar@{-->}[dr]^i &  & Y\\
 &  \bbA_F^n \ar@{-->}[ur]^j . &   }
\end{equation}
Here $i$ and $j$ are composable rational maps,
i.e., the image of $i$ and the domain of $j$ intersect non-trivially.
A finitely generated field extension $L/F$ is called {\em retract rational} if
some (and thus any) model $Y$ of $L/F$ is retract rational. Here by a model
of $L/F$ we mean an irreducible variety $Y$ defined over $F$ such that
$F(Y) = L$.

Noether'r original paper~\cite{noether} only considered problem (R)
(and only in the case, were $G$ is a finite group and $V$ is
the regular representation of $G$). Subsequent
attempts to solve problem (R) naturally led to problems (SR) and (RR).
Note, in particular, that the answers to problems (SR) and (RR)
depend only on the group $G$ and not on the choice
of generically free representation $V$.  For this reason
we will refer to these problems as {\em Noether's problems (SR) and
(RR) for $G$} in the sequel.
The answer to problem (R) may a priori depend on the choice of $V$.

\begin{remark} \label{rem.special} {\rm (}see~\cite[Section~4.2]{cts}{\rm )} 
Suppose $G$ is a special group defined over $F$, i.e., 
$H^1(K, G) = \{ 1 \}$ for every field extension $K/F$. 
Recall that a special group is always linear and connected; see
\cite[Theorem 4.4.1.1]{serre-special}. 

Let $\pi \colon V \dasharrow V/G$ be the rational quotient map.
That is, $V/G$ is any variety defined over $F$ whose function field in 
$F(V)^G$, and $\pi$ is induced by the inclusion of fields 
$F(V)^G \hookrightarrow F(V)$.  If $G$ is special, $\pi$ has 
a rational section and thus
$V$ is birationally isomorphic to $V/G \times G$ over $F$.
Consequently, Noether's problem (SR) has a positive solution
for $G$ if and only if $G$ is itself stably rational over $F$, 
and similarly for Noether's problem (RR).
\end{remark}

\begin{defn} \label{def.r-trivial} We will say that
a $G$-torsor $\alpha$ over a field $K$ is $r$-trivial 
if it can be connected to the trivial
torsor by a rational curve.  In other words, $\alpha$ is $r$-trivial if 
there exists an open subset $C \subset \bbA^1$ defined 
over $K$, a $G$-torsor $Y \to C$, and $K$-points 
$p_1, p_2 \colon \Spec(K) \to C$ such that 
$p_1^*(Y) \simeq \alpha$ and $p_2^*(Y)$ is split.
\end{defn}

Note that our notion of 
$r$-triviality is a minor variant of the more commonly used notion
of $R$-triviality, introduced by Manin~\cite{manin2}.
A $G$-torsor $\alpha$ over $K$ is called 
$R$-trivial if it can be connected to the trivial torsor by
a chain of rational curves defined over $K$.

\begin{lem} \label{lem.R-connected}
Suppose Noether's problem (RR) has a positive solution for an
affine algebraic group $G/K$.  Then every $G$-torsor 
$\alpha \colon X \to \Spec(K)$ 
is $r$-trivial, for every infinite field $K$ containing $F$.
\end{lem}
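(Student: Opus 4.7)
The plan is to combine a standard versal torsor argument with the assumed retract rationality. Fix a generically free linear representation $V$ of $G$ over $F$, let $V^\circ \subset V$ be the dense open subset on which $G$ acts freely, and choose a birational model $V/G$ of $F(V)^G$ such that the quotient map $\pi \colon V^\circ \to V/G$ is a $G$-torsor over a dense open of $V/G$. Since Noether's problem (RR) has a positive solution for $G$ over $F$, the variety $V/G$ is retract rational over $F$, hence over $K$ after base change. Fix rational maps $i \colon V/G \dashrightarrow \mathbb{A}^n_K$ and $j \colon \mathbb{A}^n_K \dashrightarrow V/G$ with $j \circ i = \id$ on a dense open $U \subset V/G$; after shrinking $U$ we may further assume that $\pi$ is a $G$-torsor over $U$.

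The key step is the following versality assertion: for any $G$-torsor $\beta$ over $K$ and any non-empty open $W \subset V/G$, there exists a $K$-point $p \in W(K)$ such that $p^\ast \pi \simeq \beta$. To prove it, form the twist $^\beta V = (V \times \beta)/G$. Since the $G$-action on $V$ is linear, it factors through $\GL(V)$, and $H^1(K, \GL(V)) = 1$ by Hilbert~90, so $^\beta V$ is isomorphic to a $K$-vector space of dimension $\dim V$. Therefore $^\beta V^\circ := (V^\circ \times \beta)/G$ is a dense open subset of an affine space over the infinite field $K$, and its $K$-points are Zariski dense. The natural morphism $^\beta V^\circ \to V/G$ is dominant, so the image of $^\beta V^\circ(K)$ meets $W$. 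Any such $p \in W(K)$ lifts to a $K$-point of $^\beta V^\circ$, which, by the standard theory of twisted forms, identifies $p^\ast \pi$ with $\beta$.

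Applying this versality twice with $W = U$ yields $K$-points $p_1, p_2 \in U(K)$ satisfying $p_1^\ast \pi \simeq \alpha$ and $p_2^\ast \pi$ split; we may assume $\alpha$ is non-split (otherwise the conclusion is immediate), and then $p_1 \neq p_2$ automatically. Setting $q_k := i(p_k) \in \mathbb{A}^n_K$, the identity $j(q_k) = p_k$ forces $q_1 \neq q_2$. Let $L \subset \mathbb{A}^n_K$ be the affine line through $q_1$ and $q_2$, and put
\[
C := L \cap \operatorname{dom}(j) \cap j^{-1}(U).
\]
This is a non-empty open subset of $L \simeq \mathbb{A}^1_K$ containing $q_1$ and $q_2$. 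Pulling back $\pi$ along $C \hookrightarrow L \xrightarrow{j} V/G$ produces a $G$-torsor $Y \to C$ with $q_1^\ast Y \simeq \alpha$ and $q_2^\ast Y$ split, exhibiting $\alpha$ as $r$-trivial.

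The main technical obstacle is the versality step, in particular the invocation of Hilbert~90 to realize $^\beta V$ as an affine space over $K$ so that one can locate $K$-points of $^\beta V^\circ$ mapping into any prescribed non-empty open of $V/G$. Everything else is a routine application of retract rationality to connect two preselected $K$-points of $V/G$ by a rational curve coming from an affine line in $\mathbb{A}^n_K$.
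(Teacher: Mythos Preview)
Your proof is correct and follows essentially the same strategy as the paper: invoke versality of the quotient torsor $\pi$ to locate $K$-points $p_1,p_2$ with prescribed fibers, then use retract rationality to connect them via a line in $\mathbb{A}^n$. Your version is marginally cleaner in two respects---you prove versality directly via Hilbert~90 rather than citing it, and you keep the curve $C$ as an open subset of the line $L\subset\mathbb{A}^n$ (pulling back $\pi$ through $j$) instead of taking $C=j(L)\subset V/G$, which lets you bypass the paper's check that $j(L)$ is smooth at $p_1,p_2$.
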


\begin{proof} There is a dense $G$-invariant open 
subset $V_0 \subset V$ which is the total space 
of a $G$-torsor $\pi \colon V_0 \to Y$; see~\cite[Section 5]{gms}.  
Here $\pi^{\ast} F(Y) = F(V)^G$. 
Recall that we are assuming $Y$ is retract rational.
After replacing $Y$ by a dense open subset, we may further assume
that $Y$ is a locally closed subvariety of $\bbA^n$, 
$i \colon Y \dasharrow \bbA^n$ in~\eqref{e.retract} 
is the inclusion map, and $j \colon \bbA^n \dasharrow Y$ is regular on some
dense open subset $U$ of $\bbA^n$ containing $Y$.

It is well known that $\pi$ is a versal torsor; once again, 
see~\cite[Section 5]{gms} or~\cite{dr}.  In particular,
there is a $K$-point $p_1 \colon \Spec(K) \to Y$ such that 
$\pi$ restricts to $\alpha$ over $p_1$, i.e.,
$p_1^*(\pi) = \alpha$. Similarly, there is a point
$p_2 \colon \Spec(K) \to Y$ such that $\pi$ splits over $p_2$.
It now suffices to connect $p_1$ and $p_2$ by an affine
rational curve $C \subset Y$, defined over $K$, which is smooth
at $p_1$ and $p_2$. After removing a closed subset from $C$ away 
from $p_1$ and $p_2$, we may
assume that $C$ is isomorphic to an open subset of $\bbA_K^1$. Then
we obtain a torsor $T \to C$ with the desired properties
by pulling back $\pi$ to $C$. 

To construct $C$, we first connect $p_1$ and $p_2$ by 
a rational curve $C_0$ in $\bbA^n$, then set $C := j(C_0)$.
Note that since $j \colon U \to Y$ is the identity map on $Y$,
the differential $d j_{p}$ is surjective for every $p \in Y$.
Hence, we can choose $C_0$ so that $C$ is smooth at $p_1$ and $p_2$.
\end{proof}

\section{The Noether problem for a class of twisted groups}

Let $G_0 := G(F^n/F) = (\GL_2 \times \bbG_m^n)/\bbG_m$, where
$\bbG_m$ is centrally embedded into $\GL_2 \times \bbG_m^n$ by $t \mapsto
(t^{-1} \Id, t, \dots, t)$.  The group $G_0$ and its twisted forms,
\begin{equation} \label{e.G}
G(E/F) := (\GL_2 \times R_{E/F}(\bbG_m))/\bbG_m,
\end{equation}
where $E/F$ is an \'etale algebra of degree $n$,
will play a prominent role in the sequel.

Recall that $\moduli{0}{n}$ is $\Sym_n$-equivariantly
birationally isomorphic to $(\bbP^{1})^n/\PGL_2$. In turn, 
$(\bbP^{1})^n/\PGL_2$ is $\Sym_n$-equivariantly birationally 
isomorphic to \[ (\bbA^2)^n/(\GL_2 \times (\bbG_m)^n) \, . \]
Here we identify $\bbG_m^n$ with the diagonal maximal torus 
in $\GL_n$, and $(\bbA^2)^n$ with the affine space $\Mat_{2, n}$
of $2 \times n$ matrices.
The group $\GL_2$ acts on $\Mat_{2, n}$ via multiplication on 
the left, and the torus $\bbG_m^n$ acts via multiplication on the right.
These two commuting linear actions give rise to a linear representation 
\[ \GL_2 \times \bbG_m^n \to \GL(\Mat_{2, n}) \, . \]
One readily checks that the kernel of this representation is
\[ H = \{ (t^{-1} \Id, t, \dots, t) \in \GL_2 \times \bbG_m^n \, | \; 
t \in \bbG_m \} \simeq \bbG_m \]
and that the induced representation 
\[
\phi \colon G_0 = (\GL_2 \times \bbG_m^n)/\bbG_m \to \GL(\Mat_{2, n}) 
\]
is generically free (recall that we are assuming that $n \geqslant 5$
throughout).
Now identify $\Sym_n$ with the subgroup of permutation matrices 
in $\GL_n$, and let this group act on
$\Mat_{2, n}$ linearly, via multiplication on the right. 
In summary,
\begin{equation} \label{e.G_0}
\moduli{0}{n} \simeq (\bbP^{1})^n/\PGL_2 \simeq \Mat_{2, n}/G_0,
\end{equation}
where $\simeq$ denotes an $\Sym_n$-equivariant birational isomorphism.

Let $\tau$ be an $\Sym_n$-torsor over $\Spec(F)$.
Since $\Sym_n$ normalizes $\bbG_m^n$ in $\GL_n$, we can twist
the group $G_0$ and the representation
$\phi$ by $\tau$ and obtain a new group 
\begin{equation} \label{e.twistedG} 
{\, }^{\tau} G_0 := 
{\, }^{\tau} (\GL_2 \times R_{E/F}(\bbG_m))/ {\, }^{\tau} H := G(E/F)
\end{equation}
and a new representation ${ }^{\tau} \phi \colon
{ }^{\tau} G_0 \to \GL({ }^{\alpha} \Mat_{2, n})$ defined over $F$.
Note that $\Sym_n$ acts trivially on $H$, and thus
${\, }^{\tau} H \simeq H \simeq \bbG_m$ over $F$. Moreover,
by Hilbert's Theorem 90, 
${ }^{\tau} \Mat_{2, n}$ is isomorphic to $\Mat_{2, n}$ as 
an $F$-vector space. Explicitly,  
\[ { }^{\tau} G_0 \simeq G(E/F) := (\GL_2 \times R_{E/F}(\bbG_m))/\bbG_m \, , \]
${ }^{\tau} \Mat_{2, n}$ is the affine space $\bbA(F^2 \otimes_F E)$, where
$\GL_2$ acts $F$-linearly on $F^2 \otimes_F E$ via multiplication on $F^2$
and $R_{E/F}(\bbG_m))$ acts via multiplication on $E$.
We have thus proved the following:

\begin{prop} \label{prop.Noether} 
Let $F$ be a field, $\tau$ be an $\Sym_n$-torsor over $\Spec(F)$,
and $E/F$ be the \'etale algebra associated to $\tau$.

\smallskip
(a) {\rm (cf.~\cite[Theorem 6.1]{dr})} 
${ }^{\tau} \moduli{0}{n}$ is unirational.

\smallskip
(b) ${ }^{\tau} \moduli{0}{n}$ is rational over $F$ if and only if
Noether's problem (R) for the representation ${ }^\tau \phi$ 
of the group $G(E/F)$ has a positive solution.

\smallskip
(c) ${ }^{\tau} \moduli{0}{n}$ is stably rational over $F$ if 
and only if Noether's problem (SR) for the 
group $G(E/F)$ has a positive solution.

\smallskip
(d) ${ }^{\tau} \moduli{0}{n}$ is retract rational over $F$ 
if and only if Noether's problem (RR) for the 
group $G(E/F)$ has a positive solution.
\qed
\end{prop}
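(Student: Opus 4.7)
The plan is to exploit the $\Sym_n$-equivariant birational isomorphism~\eqref{e.G_0}, $\moduli{0}{n} \simeq \Mat_{2,n}/G_0$, by twisting everything in sight by $\tau$. Applying the twisting operation to both sides yields a birational isomorphism of $F$-varieties ${}^\tau \moduli{0}{n} \simeq V/G(E/F)$, where $V := {}^\tau \Mat_{2,n}$ is, by Hilbert's Theorem 90, the affine space $\bbA(F^2 \otimes_F E)$ of dimension $2n$ over $F$, and where $G(E/F)$ acts on it via the twisted representation ${}^\tau \phi$. Since generic freeness is a geometric condition preserved under twisting, ${}^\tau \phi$ remains generically free. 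This reduces the entire proposition to Noether-type statements about the pair $(G(E/F), V)$.

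Parts (a) and (b) are then immediate from this setup. For (a), the rational quotient map $V \dasharrow V/G(E/F) \simeq {}^\tau \moduli{0}{n}$ realizes ${}^\tau \moduli{0}{n}$ as the image of an affine space under a dominant rational $F$-map, so ${}^\tau \moduli{0}{n}$ is unirational over $F$. For (b), the function field of ${}^\tau \moduli{0}{n}$ is by construction $F(V)^{G(E/F)}$, so its being rational over $F$ is literally the assertion that Noether's problem (R) for ${}^\tau \phi$ has a positive solution.

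Parts (c) and (d) require the additional input of the \emph{no-name lemma}: for any two generically free linear representations $V_1$ and $V_2$ of a linear algebraic group $H$ over $F$, the quotients $V_1/H$ and $V_2/H$ are stably birationally equivalent over $F$. Indeed, the diagonal action on $V_1 \times V_2$ yields rational quotient maps $(V_1 \times V_2)/H \dasharrow V_i/H$ which are generically vector bundles, by the standard slice construction. Applied to $H = G(E/F)$, this shows that stable (respectively, retract) rationality of $V/G(E/F) \simeq {}^\tau \moduli{0}{n}$ over $F$ is independent of the choice of generically free representation, and hence is equivalent to a positive solution of Noether's problem (SR) (respectively, (RR)) for the group $G(E/F)$.

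The main obstacle, modest though it is, lies in making the twist of the birational quotient $\phi$ rigorous: one must check both that generic freeness descends under the twist and that formation of the rational quotient commutes with twisting. Both points are handled by \'etale descent, after shrinking to a dense $\Sym_n$-stable open subset $U \subset \Mat_{2,n}$ on which $G_0$ acts freely; twisting the $G_0$-torsor $U \to U/G_0$ by $\tau$ then produces the required $G(E/F)$-torsor model for ${}^\tau \phi$ over a dense open subset of ${}^\tau \moduli{0}{n}$.
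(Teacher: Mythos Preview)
Your proposal is correct and follows essentially the same approach as the paper: the proposition is stated immediately after the discussion establishing the $\Sym_n$-equivariant birational isomorphism~\eqref{e.G_0} and its twist by $\tau$, with the paper declaring ``We have thus proved the following'' and ending the statement with \qed. Your write-up is somewhat more explicit than the paper's, particularly in invoking the no-name lemma for parts (c) and (d) (the paper only alludes to this earlier, when remarking that the answers to (SR) and (RR) depend only on $G$) and in spelling out why twisting commutes with taking rational quotients.
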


\section{The Galois cohomology of $G(E/F)$}

Let $E/F$ be a finite-dimensional \'etale algebra and
$G := G(E/F) := (\GL_2 \times R_{E/F}(\bbG_m))/\bbG_m$
be the algebraic group we considered in the previous section; see~\eqref{e.G}.

\begin{lem} \label{lem.Galois-cohomology}
Let $\mathcal{F} \colon \Fields_F \to \Sets$ be the functor from 
the category of field extensions of $F$ to the category of sets,  
defined as follows: 

\begin{eqnarray*} 
\text{$\mathcal{F}(K) := \{$isomorphism classes of quaternion $K$-algebras $A$} \\ 
\text{such that $A$ is split by $E \otimes_F K \}$.} 
\end{eqnarray*}

\smallskip
\noindent
Then the functors $\mathcal{F}$ and $H^1(\ast, G)$ are isomorphic.
\end{lem}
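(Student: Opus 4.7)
The plan is to extract from the defining presentation of $G := G(E/F)$ a short exact sequence
\[
1 \to R_{E/F}(\bbG_m) \to G \to \PGL_2 \to 1 \qquad (\star)
\]
and then use its long exact cohomology sequence, together with Hilbert 90, Shapiro's lemma, and functoriality of connecting maps, to pin down $H^1(K,G)$ inside $H^1(K, \PGL_2)$, which classically parametrizes isomorphism classes of quaternion $K$-algebras.

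First I would construct $(\star)$ by observing that the composition $\GL_2 \times R_{E/F}(\bbG_m) \twoheadrightarrow \GL_2 \twoheadrightarrow \PGL_2$ kills the antidiagonal copy of $\bbG_m$ from \eqref{e.G}, so it factors through a surjection $G \twoheadrightarrow \PGL_2$. A routine calculation with coset representatives shows that its kernel is $\{(\Id,r) \bmod \bbG_m : r \in R_{E/F}(\bbG_m)\} \cong R_{E/F}(\bbG_m)$, and this kernel is central in $G$ because $R_{E/F}(\bbG_m)$ is abelian. Taking the associated long exact sequence and using $H^1(K, R_{E/F}(\bbG_m)) = H^1(E \otimes_F K, \bbG_m) = 0$ (Shapiro plus Hilbert 90), the centrality of the kernel makes $H^1(K, G) \hookrightarrow H^1(K, \PGL_2)$ an injection whose image equals $\ker \partial$, where
\[
\partial \colon H^1(K,\PGL_2) \to H^2(K, R_{E/F}(\bbG_m)) = \Br(E \otimes_F K)
\]
is the connecting homomorphism.

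The main content is to identify $\partial$ concretely. For this I would compare $(\star)$ with the classical central extension $1 \to \bbG_m \to \GL_2 \to \PGL_2 \to 1$ via the morphism of sequences given by the diagonal inclusion $\bbG_m \hookrightarrow R_{E/F}(\bbG_m)$ and by $g \mapsto (g,1) \bmod \bbG_m \colon \GL_2 \to G$, which together induce the identity on $\PGL_2$. Functoriality of the connecting map then yields a commutative square identifying $\partial$ with the composition $H^1(K, \PGL_2) \to \Br(K) \to \Br(E \otimes_F K)$, where the first arrow is the classical boundary sending a quaternion algebra to its Brauer class, and the second is base change. Hence $\partial([A]) = 0$ precisely when $A$ is split by $E \otimes_F K$, which identifies $H^1(K,G)$ with $\calF(K)$; naturality in $K$ is automatic from that of the extension and of the connecting map. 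The main obstacle will be verifying that the two extensions fit into a genuine morphism of short exact sequences so that the resulting square of connecting maps commutes; once this is in place, the identification of $\partial$ with base change of Brauer classes is a standard diagram chase.
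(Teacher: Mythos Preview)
Your approach is essentially the same as the paper's: both use the short exact sequence $1 \to R_{E/F}(\bbG_m) \to G \to \PGL_2 \to 1$, the vanishing of $H^1(K, R_{E/F}(\bbG_m))$ via Shapiro and Hilbert~90, and the identification of the connecting map into $\Br(E \otimes_F K)$ with base change of the Brauer class. The paper cites \cite[Proposition~42]{serre-gc} for injectivity of $H^1(K,G) \to H^1(K,\PGL_2)$ (which is exactly your centrality argument) and simply asserts the description of $\delta$, whereas you supply the comparison with $1 \to \bbG_m \to \GL_2 \to \PGL_2 \to 1$ to justify it; this extra step is correct and fills in what the paper leaves implicit.
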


\begin{proof}  Consider the the short exact sequence  
\begin{equation} \label{e.ses}
1 \to R_{E/F}(\bbG_m) \to G \to \PGL_2 \to 1 
\end{equation}
of algebraic groups and the associated long exact sequence
\begin{equation} \label{e.gc}
 H^1(K, R_{E/F}(\bbG_m)) \to H^1(K, G)  \xrightarrow{\alpha} H^1(K, \PGL_2) \xrightarrow{\delta}
H^2(K, R_{E/F}(\bbG_m)) 
\end{equation}
of Galois cohomology sets.  By Shapiro's Lemma, 
\[ H^1(K, R_{E/F}(\bbG_m)) \simeq H^1(K \otimes_F E, \bbG_m) = \{ 1 \} \, , \]
and $H^2(K, R_{E/F}(\bbG_m)) \simeq H^2(K \otimes_F E, \bbG_m)$ is in
a natural bijective correspondence with the Brauer group $\Br(K \otimes_F E)$.
Thus the long exact sequence~\eqref{e.gc} simplifies to
\begin{equation} \label{e.gc2}
\{ 1 \} \to H^1(K, G)  \xrightarrow{\alpha} H^1(K, \PGL_2) 
\xrightarrow{\delta} \Br(K \otimes_F E) \, . 
\end{equation}
Here $H^1(K, \PGL_2)$ is the set of isomorphism classes
of quaternion algebras $A/K$. 
The connecting map $\delta$ takes an algebra $A/K$ to 
$A \otimes_K (K \otimes_ F E)$.  By~\cite[Proposition 42]{serre-gc},  
$\alpha$ 
is injective.\footnote{Note that a priori the exact sequence~\eqref{e.gc2} 
only tells us that $\alpha$ has trivial kernel. Injectivity 
is not automatic, since $H^1(K, R_{E/F}(\bbG_m))$ and $H^1(K, G)$ 
are pointed sets with no group structure.}
Hence, we can identify $H^1(K, G)$ with
the kernel of $\delta$, and the lemma follows.
\end{proof}

\begin{remark} \label{rem.stably-rational}
When $n$ is odd, Lemma~\ref{lem.Galois-cohomology} tells us that
$H^1(K, G) = \{ 1 \}$ for every field $K/F$. In other words,
$G(E/F)$ is a special group.  Using the short exact 
sequence~\eqref{e.ses} one readily checks that $G(E/F)$ is rational 
over $F$. By Remark~\ref{rem.special}, we conclude that the Noether 
problem (SR) for this group has a positive solution. In other words,
every $F$-form of $\moduli{0}{n}$ is stably rational over $F$.  
This is a bit weaker than Theorem~\ref{thm.main}(a), which will be proved
in the next section.
\end{remark}

\section{Proof of Theorem~\ref{thm.main}(a)}
\label{sect.main-a}

Suppose $n = 2s + 1 \geqslant 5$ is odd. Our goal is to show that 
${ }^{\tau} \moduli{0}{n}$ is rational over $F$ for every infinite field $F$
and every $\tau \in H^1(F, \Sym_n)$. Let $E/F$ be the \'etale algebra
representing $\tau$. In view of Proposition~\ref{prop.Noether}(b), it suffices to show that
Noether's problem (R) for the representation ${ }^\tau \phi$ 
of the group $G(E/F)$ has a positive solution.

Recall that ${ }^\tau \phi$ is the natural  
representation of $G(E/F)$ on $F^2 \otimes_F E$ of $G(E/F)$.
The quotient $\mathbb A( F^2 \otimes_F E)/  \GL_2$ is
the Grassmannian $\Gr(2, E)$ (up to birational equivalence).
Thus the quotient   $\mathbb A( F^2 \otimes_F E)/ G(E/F)$ 
is birational to the quotient $\Gr(2, E)/ R_{E/F}(\bbG_m)$. 

Note that the diagonal subgroup 
$\bbG_m \hookrightarrow \bbG_m^n$ acts trivially on $\Gr(2, n)$. 
Hence, 
\[ \bbG_m = { }^\alpha (\bbG_m) \hookrightarrow { }^\alpha(\bbG_m^n) = 
R_{E/F}(\bbG_m) \]
acts trivially on $\Gr(2, E)$, and $R_{E/F}^0(\bbG_m):= R_{E/F}(\bbG_m)/\bbG_m$ acts
faithfully on $\Gr(2, E)$.
Our proof of the rationality of the quotient variety $\Gr(2, E)/R_{E/F}^0(\bbG_m)$ 
below is inspired by the arguments in~\cite{florence2}.

Fix an $F$-vector subspace $W$ of $E$ of dimension $s$, and define the rational map
\[ \begin{array}{r} f_W \colon \Gr(2, E) \dasharrow \check{\bbP}(E) \\ 
V  \to  V \cdot W, 
\end{array}  \]
where $V \cdot W$ is the $F$-linear span of elements of the form
$v \cdot w$ in $E$, as $v$ ranges over $V$ and $w$ ranges over $W$. Here
$v \cdot w$ stands for the product of $v$ and $w$ in $E$, and 
$\check{\bbP}(E)$ denotes the dual projective space to $\bbP(E)$. 
In other words, points of $\check{\bbP}(E)$ are 
$2s$-dimensional $F$-linear subspaces of $E$.

\begin{lem} \label{lem.well-defined}
(a) The dual projective space $\check{\bbP}(E)_0$ has a point $H$
whose orbit with respect to the natural action of $R_{E/F}^0(\bbG_m)$ 
is dense and whose stabilizer is trivial.

(b) Suppose $W \in \Gr(s, E)$ is such that
$f_W$ is well defined (i.e., 
$\dim(V \cdot W) = 2s$ for general $V \in \Gr(2, E)$).
Then $f_W$ is equivariant with respect to the natural 
action of $R_{E/F}^0(\bbG_m)$ on $\Gr(2, E)$ and $\check{\bbP}(E)$.

(c) There exists $W \in \Gr(s, E)$ defined over $F$ such that
$f_W$ is well defined and dominant.
\end{lem}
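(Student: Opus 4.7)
The approach is to dispatch (a) and (b) with short computations and then focus on (c), which carries the main content.

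For (a), I would take $H_0 := \ker(\tr_{E/F}) \subset E$, a hyperplane since the trace form on the \'etale algebra $E$ is non-degenerate. An element $u \in R_{E/F}(\bbG_m)$ sends $H_0$ to $\{y \in E : \tr_{E/F}(u^{-1}y) = 0\}$, which by non-degeneracy of the trace pairing equals $H_0$ exactly when $u^{-1} \in F \cdot 1$. Hence the scheme-theoretic stabilizer of $H_0$ in $R_{E/F}(\bbG_m)$ is the central $\bbG_m$, so its stabilizer in $R_{E/F}^0(\bbG_m)$ is trivial; combined with $\dim R_{E/F}^0(\bbG_m) = n-1 = \dim \check{\bbP}(E)$, this forces the orbit of $H_0$ to be dense. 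Part (b) is immediate from commutativity of $E$: for $u \in R_{E/F}(\bbG_m)$ one has $u \cdot (V \cdot W) = (uV) \cdot W = f_W(uV)$, so $f_W$ is $R_{E/F}^0(\bbG_m)$-equivariant.

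For (c), I would use that $F$ is infinite and $E/F$ is \'etale to pick a primitive element $\alpha \in E$, so that $1, \alpha, \ldots, \alpha^{n-1}$ is an $F$-basis of $E$. Put $W := \Span_F(1, \alpha, \ldots, \alpha^{s-1})$ and $V_0 := \Span_F(1, \alpha^s) \in \Gr(2, E)$. A direct computation gives $V_0 \cdot W = \Span_F(1, \alpha, \alpha^2, \ldots, \alpha^{2s-1})$, which is $2s$-dimensional, establishing well-definedness of $f_W$. For dominance, by (b) it suffices to show that $f_W(V_0)$ lies in the dense orbit of (a). Passing to $F^{sep}$ and identifying $E \otimes_F F^{sep} \cong (F^{sep})^n$ so that $\alpha$ becomes $(\lambda_1, \ldots, \lambda_n)$ with pairwise distinct entries, the (unique up to scalar) defining linear form $\sum_j a_j y_j$ of the hyperplane $V_0 \cdot W$ must satisfy the Vandermonde relations
\[ \sum_{j=1}^n a_j \lambda_j^i = 0 \quad \text{for } i = 0, 1, \ldots, 2s-1. \]
Vanishing of any single $a_{j_0}$ would leave a homogeneous system on the remaining coefficients whose matrix is a full $2s \times 2s$ Vandermonde in the distinct $\lambda_j$ with $j \neq j_0$, forcing all remaining $a_j$ to vanish as well. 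Hence every $a_j$ is non-zero, placing $f_W(V_0)$ in the dense orbit from (a).

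The main obstacle is the dominance half of (c). The primitive-element setup is the critical device: it renders $f_W(V_0)$ explicitly as a Vandermonde-spanned hyperplane, so that the required genericity (non-vanishing of all coefficients of the defining form) follows from a clean Vandermonde non-singularity argument, which in turn pipes back through the equivariance of (b) to yield dominance from a single computed value.
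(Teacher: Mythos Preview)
Your proof is correct and follows the paper's approach almost exactly: parts (b) and (c) are identical in substance (your primitive element $\alpha$ is precisely the paper's element $a \in E$ with distinct eigenvalues, and the Vandermonde arguments for well-definedness and dominance coincide). In part (a) you give a slightly cleaner variant, exhibiting the explicit $F$-rational hyperplane $\ker(\tr_{E/F})$ and arguing via non-degeneracy of the trace form, whereas the paper passes to $F^{sep}$ and computes in coordinates; both arguments identify the same dense orbit $\check{\bbP}(E)_0$.
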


\begin{proof}
The assertions of parts (a) and (b) can be checked after passing to the separable closure
of $F^{sep}$ of $F$. In other words, we may assume that $F = F^{sep}$. In this case $E$ is the split algebra $F^n$,
$R_{E/F}^0(\bbG_m) = \bbG_m^{n}/\bbG_m$, and $\check{\bbP}(E) = \check{\bbP}^{n-1}$. 

(a) $(t_1, \dots, t_n) \in \bbG_m^n/\bbG_m$
takes the hyperplane $H \in \check{\bbP}(E)$ given by $c_1 x_1 + \dots + c_n x_n = 0$
to the hyperplane given by $(t_1^{-1} c_1) x_1 + \dots + (t_n^{-1}c_n) x_n = 0$. Thus any $H$
with $c_1, \dots, c_n \neq 0$ has a dense orbit in $\check{\bbP}(E)$ with trivial stabilizer.
In fact, all such $H$ lie in the same dense orbit; for future reference, we will denote this
dense orbit by $\check{\bbP}(E)_0$.

(b) Given $t = (t_1, \dots, t_n) \in \bbG_m^n$, we see that
\[ (tv) \cdot w = (t_1 a_1 b_1, \dots, t_n a_nb_n) = t (v \cdot w) \, . \]
for any $v = (a_1, \dots, a_n) \in V$ and $w = (b_1, \dots, b_n) \in W$. 
Hence, $(tV) \cdot W = t(V \cdot W)$, as desired. 

(c) Recall that the eigenvalues of $a \in E$ are the eigenvalues of the multiplication map $E \to E$ 
given by $x \mapsto ax$. They are elements of $F^{sep}$. 
Under an isomorphism
between $E \otimes_F F^{sep}$ and $(F^{sep})^n$ (over $F^{sep}$),
$a$ will be identified with an element of $(F^{sep})^n$ of the form
$(\lambda_1, \dots, \lambda_n)$, where $\lambda_1, \dots, \lambda_n$ are the eigenvalues of $a$.

Choose $a \in E$ with distinct eigenvalues in $F^{sep}$.
Elements of $E$ with distinct eigenvalues form a Zariski open subvariety $U$ of $\bbA(E)$
defined over $F$. Passing to $F^{sep}$, we see that $U \neq \emptyset$.
Since $F$ is assumed to be infinite, $F$-points are dense in $U$.
We choose $a$ to be one of these $F$-points,
and set $W = \Span_F(1, a, \dots, a^{s-1})$. We claim that for this choice of $W$,
the rational map $f_W$ is well defined and dominant. 

First let us show that $f_W$ is well defined. From the definition of $V \cdot W$ it is clear that
$\dim(V \cdot W) \leqslant 2s$ for any $V \in \Gr(2, E)$ and that equality holds 
for $V$ in a Zariski open subset of $\Gr(2, E)$. Thus in order to show that $f_W$ is a well-defined rational map,
it suffices to exhibit one element $V \in \Gr(2, E)$ such that $\dim(V \cdot W) = 2s$. 
We claim that $V = \Span_F(1, a^s)$ has this property, i.e.,
\[ V \cdot W = \Span_F(1, a, \dots, a^{s-1}, a^s, \dots, a^{2s-1})  \]
is a $2s$-dimensional subspace of $E$. It suffices to show that $1, a, \ldots, a^{2s}$ are linearly independent over $F$.
Passing to $F^{sep}$, we can write $a = (\lambda_1, \dots, \lambda_{2s+1})$, where $\lambda_1, \dots, \lambda_{2s+1}$
are distinct elements of $F^{sep}$. (Recall that $n = 2s+1$ throughout.) Since the $(2s + 1) \times (2s + 1)$ Vandermonde matrix
\[ \begin{pmatrix} 1 & \hdots & 1 \\ \lambda_1 & \hdots & \lambda_{2s + 1} \\ \hdotsfor{3} \\ \lambda_1^{2s} & \dots & \lambda_{2s + 1}^{2s}
\end{pmatrix} \]
is non-singular, we conclude that $1, a, \dots, a^{2s}$ are linearly independent over $F^{sep}$
and hence, over $F$, as desired. This shows that $f_W$ is well defined.

It remains to show that $f_W$ is dominant. By part (b), the image of $f_W$ is an $R_{E/G}^0(\bbG_m)$-invariant
subvariety of $\check{\bbP}(E)$. In view of part (a), it suffices to show that this subvariety intersects the dense
open orbit $\check{\bbP}(E)_0$. In fact, it suffices to show that $V \cdot W \in \check{\bbP}(E)_0$ for 
$V = \Span_F(1, a^s)$, as above. To do this, we may pass to $F^{sep}$ and thus identify $E \otimes_F F^{sep}$
with $(F^{sep})^n$. Then $a = (\lambda_1, \dots, \lambda_n)$,
where $\lambda_1, \dots, \lambda_n$ are distinct non-zero elements of $F^{sep}$. Recall from part (a)
that the complement of $\check{\bbP}(E)$ consists of hyperplanes of the form $c_1 x_1 + \dots + c_{2s} x_{2s} = 0$,
where $c_i = 0$ for some $i$ but $(c_1, \ldots, c_{2s}) \neq (0, \ldots, 0)$. It remains to show that 
the hyperplane $V \cdot W = \Span_F(1, a, \dots, a^{s-1}, a^s, \dots, a^{2s-1})$
is not of this form. Indeed, assume the contrary. By symmetry we may assume that the equation of the
hyperplane $\Span(1, \dots, a^{2s-1})$ in $E$ is
$c_1x_1 + \dots + c_{2s}x_{2s} = 0$, with $c_{2s+1} = 0$. Since $a^i \in V \cdot W$, this means that
\[ \text{$c_1 \lambda_1^i + \dots + c_{2s} \lambda_{2s}^i = 0$ for $i = 0, 1, \dots, 2s-1$.} \]
Since $\lambda_1, \dots, \lambda_{2s}$ are distinct, the $2s \times 2s$ Vandermonde matrix
\[ \begin{pmatrix} 1 & \hdots & 1 \\ \lambda_1 & \hdots & \lambda_{2s} \\ \hdotsfor{3} \\ \lambda_1^{2s-1} & \dots & \lambda_{2s}^{2s-1}
\end{pmatrix} \] is non-singular. This implies that $c_1 = \dots = c_{2s} = 0$, a contradiction. 
We conclude that $V \cdot W \in \check{\bbP}(E)_0$, as desired.
\end{proof}

We are now ready to finish the proof of Theorem~\ref{thm.main}(a).

Let $W \in \Gr(s, E)$ be the $s$-dimensional $F$-vector subspace of $E$ given 
by Lemma~\ref{lem.well-defined}. Choose 
a dense open $R_{E/F}^0(\bbG_m)$-invariant subvariety $U \subset \Gr(2, E)$ defined over $F$ such that
$f_W \colon \Gr(2, E) \dasharrow \check{\bbP}(E)_0$ restricts to a regular map on $U$, and the rational quotient map
$\Gr(2, E) \dasharrow \Gr(2, E)/R_{E/F}^0(\bbG_m)$ restricts to a $R_{E/F}^0(\bbG_m)$-torsor 
$\pi \colon U \to \Gr(2, E)/R_{E/F}^0(\bbG_m)$ (over a suitably chosen birational model of
$\Gr(2, E)/R_{E/F}^0(\bbG_m)$).
In summary, we obtain the following 
diagram of $R_{E/F}^0(\bbG_m)$-equivariant dominant rational maps:
\[ \xymatrix{  \Gr(2, E)  \ar@{-->}[r]^{f_W} & \check{\bbP}(E) &   \\
     U   \ar@{->}[d]_{\pi}^{\text{$R_{E/F}^0(\bbG_m)$-torsor}}  \ar@{^{(}->}[u]^{\text{open}} \ar@{->}[r]^{f_W} & \ar@{^{(}->}[u]_{\text{open}} \check{\bbP}(E)_0 \ar@{=}[r]^{\sim \quad } & R_{E/F}^0(\bbG_m) \\
 \quad \quad \quad \Gr(2, E)/R_{E/F}^0(\bbG_m). & &   } \]
Now choose an $F$-point $H \in \check{\bbP}(E)_0$; this can be done because we are assuming that $F$ is an infinite field.
  From the diagram, we see that  $f_W^{-1}(H) \subset U$ is  
 a section of $\pi$. In particular, $\Gr(2, E)/R_{E/F}^0(\bbG_m)$ is birationally  isomorphic to 
$f_W^{-1}(H)$ over $F$. It thus remains to show that $f_W^{-1}(H)$ is rational over $F$.

Let $Z$ be the $F$-vector subspace of $E$ given by $Z = \{ a \in E \, | \, a \cdot W \subset H \}$.  
Clearly $V \in U$ belongs to $\phi^{-1}(H)$ if and only if
$V \subset Z$ or equivalently, $V \in \Gr(2, Z)$. 
Thus $f_W^{-1}(H) = \Gr(2, Z) \cap U$ is a dense open subset of $\Gr(2, Z)$.
Clearly $f_W^{-1}(H)$ is non-empty. Since $\Gr(2, Z)$ is rational over $F$,
we conclude that $f_W^{-1}(H)$ is also rational over $F$, as desired.
\qed

\section{Proof of Theorem~\ref{thm.main}(b)}

We will deduce Theorem~\ref{thm.main}(b) from the following proposition.

\begin{prop} \label{prop.not-R-trivial}
Suppose $F$ is a field of characteristic $\neq 2$ and
$A = (a_1, a_2)$ is a quaternion division algebra over $F$,
for some $a_1, a_2 \in F^*$. Set $a_3 = a_1 a_2$ and
$E_i = F(\sqrt{a_i})$, for $i = 1, 2, 3$.
Consider the \'etale $F$-algebra 
\[ E = E_1^{n_1} \times E_2^{n_2} \times E_3^{n_3} \, , \] 
for some $n_1, n_2, n_3 \geqslant 1$.
Then Noether's problem (RR) has a negative solution for the group
$G(E/ F) = (\GL_2 \times R_{E/F}(\bbG_m))/\bbG_m$.
\end{prop}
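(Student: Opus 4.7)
The strategy is to apply Lemma~\ref{lem.R-connected} in contrapositive form: it is enough to produce an infinite extension $K/F$ and a $G(E/F)$-torsor over $K$ that fails to be $r$-trivial. By Lemma~\ref{lem.Galois-cohomology}, such a torsor is the same thing as a nonzero quaternion algebra $B$ over $K$ that is split by each of $K(\sqrt{a_1})$, $K(\sqrt{a_2})$, and $K(\sqrt{a_3})$.

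The heart of the argument is the claim that \emph{every} such nonzero $B$ is automatically non-$r$-trivial. Assume toward a contradiction that $B$ is $r$-trivial: there exist an open subset $C \subset \bbA^1_K$, a quaternion algebra $B_t \in \Br_2(C)$ whose fiber at each point of $C$ is split by $E \otimes_F K(t)$, and $K$-points $p_1, p_2 \in C$ with $B_t(p_1) = B$ and $B_t(p_2) = 0$. Viewing $B_t$ as an element of $\Br_2(K(t))$, I would compute its residue $r_q \in \kappa(q)^*/\kappa(q)^{*2}$ at every closed point $q \in \bbA^1_K$. The splitting of $B_t$ over $K(\sqrt{a_i})(t)$ forces the residues of $B_t \otimes K(\sqrt{a_i})(t)$ at the closed points of $\bbA^1_{K(\sqrt{a_i})}$ above $q$ to vanish, which translates to $r_q \equiv 1 \bmod \kappa(q)^{*2}$ when $\sqrt{a_i} \in \kappa(q)$, and $r_q \in \{1, a_i\} \bmod \kappa(q)^{*2}$ otherwise. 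Because $a_1 a_2 = a_3$, the set $S_q := \{i : \sqrt{a_i} \in \kappa(q)\}$ must have cardinality $0$, $1$, or $3$; in each case (using, when $|S_q|=0$, that $a_1, a_2, a_3$ are then pairwise non-congruent in $\kappa(q)^*/\kappa(q)^{*2}$) one obtains $r_q \equiv 1$. Hence $B_t$ is unramified on all of $\bbA^1_K$, so by $\bbA^1$-invariance of \'etale cohomology with $\mu_2$-coefficients in characteristic $\neq 2$, $B_t \in \Br_2(\bbA^1_K) = \Br_2(K)$ is constant, forcing $B = B_t(p_1) = B_t(p_2) = 0$, a contradiction.

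It remains to exhibit such a nonzero $B$. The natural choice is $B := A \otimes_F K$, which is automatically split by $K(\sqrt{a_1})$ and $K(\sqrt{a_2})$ since $A$ is already split by $F(\sqrt{a_1})$ and $F(\sqrt{a_2})$. I need $K$ to be infinite with $A \otimes_F K(\sqrt{a_3}) = 0$ and $A \otimes_F K \neq 0$. If $A$ already splits over $F(\sqrt{a_3})$ (equivalently, $(a_1,-1)_{F(\sqrt{a_3})} = 0$), take $K = F(t)$. Otherwise, set $Y := R_{F(\sqrt{a_3})/F}(\operatorname{SB}(A \otimes F(\sqrt{a_3})))$ (the Weil restriction to $F$ of the Severi--Brauer variety of $A \otimes F(\sqrt{a_3})$) and $K := F(Y)$; the tautological $K$-point of $Y$ then supplies a $K(\sqrt{a_3})$-point of $\operatorname{SB}(A \otimes F(\sqrt{a_3}))$, killing $A$ over $K(\sqrt{a_3})$.

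The main obstacle in this plan is verifying $A \otimes_F K \neq 0$ in the second case, i.e.\ that $F(Y)$ does not split $A$ over $F$. I would establish this by ruling out any $F$-rational map $Y \dashrightarrow \operatorname{SB}(A)$: after base change to $L := F(\sqrt{a_3})$, such a map would produce a $\Gal(L/F)$-equivariant rational map $\operatorname{SB}(A_L) \times \operatorname{SB}(A_L) \dashrightarrow \operatorname{SB}(A_L)$, with the Galois involution swapping the two factors of $Y_L$. Since $\operatorname{SB}(A_L)$ is a non-split conic in this case, an analysis of the Picard group of the product of two copies of a non-split conic, combined with the absence of $L$-points on $\operatorname{SB}(A_L)$, precludes the existence of any such equivariant map.
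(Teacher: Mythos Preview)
Your residue argument in the second paragraph is essentially the paper's, and it is correct. The divergence is entirely in how the non-$r$-trivial torsor is produced. The paper stays over $K=F$ and takes $B=A$ itself: it asserts that $E_1$, $E_2$, $E_3$ are all maximal subfields of $A=(a_1,a_2)$, so that $A$ is split by $E$ and therefore already defines a class in $H^1(F,G)$ via Lemma~\ref{lem.Galois-cohomology}. One then runs the Faddeev-sequence residue argument immediately, with no auxiliary extension, no Weil restriction, and no Picard-group analysis. (Incidentally, in your Case~1 you can simply take $K=F$; it is infinite because it carries a quaternion division algebra.)

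Your detour is prompted by a legitimate concern: the paper's assertion that $E_3=F(\sqrt{a_1a_2})$ is a maximal subfield of $(a_1,a_2)$ is not correct in general. Over $F(\sqrt{a_1a_2})$ one computes $(a_1,a_2)=(a_1,-1)$, which need not vanish (e.g.\ $F=\bbQ$, $a_1=-1$, $a_2=7$); and when $a_1a_2$ is already a square in $F$, the algebra $E$ acquires $F$ as a direct factor, $G(E/F)$ is special, and the proposition as literally stated is false---your construction cannot manufacture a nonzero $B$ in that case either. So you have put your finger on a genuine soft spot in the paper's proof. That said, your Case~2 repair is only a sketch: rational maps $C\times C\dashrightarrow C$ need not extend to morphisms, and once one resolves the base locus the bidegree bookkeeping under the swap involution becomes delicate; it is not clear the argument closes as written. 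If you pursue it, a more direct route to $A\otimes_F F(Y)\neq 0$ is via index-reduction formulas for Weil restrictions of Severi--Brauer varieties rather than an ad hoc Picard computation.
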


Assuming that Proposition~\ref{prop.not-R-trivial} is established,
we can complete the proof of Theorem~\ref{thm.main} as follows. 
By a theorem of A.~S.~Merkurjev~\cite{merkurjev}, 
$\Br_2(F)$ is generated, as an abelian 
group by classes of quaternion algebras. Since we are assuming that
$\Br_2(F) \neq 0$, one of these classes, say, $(a_1, a_2)$ is non-split.
That is, $(a_1, a_2)$ is a division algebra. Since we are assuming that $n \geqslant 6$ is
even, we can choose $n_1, n_2, n_3 \geqslant 1$ so that
$n_1 + n_2 + n_3 = n$. For example, we can take
$n_1 = \dfrac{n}{2} - 2$, $n_2 = 1$ and $n_3 = 1$.
By Proposition~\ref{prop.not-R-trivial}, 
Noether's problem (RR) has a negative solution for the group
$G(E/ F) = (\GL_2 \times R_{E/F}(\bbG_m))/\bbG_m$. By 
Proposition~\ref{prop.Noether}(d), the $F$-form  
${ }^{\tau} \moduli{0}{n}$ of $\moduli{0}{n}$ is not retract rational
over $F$, where $\tau \in H^1(K, \Sym_n)$ is the class of the \'etale 
algebra $E/F$. This completes the proof of Theorem~\ref{thm.main}(b).
\qed

\begin{proof}[Proof of Proposition~\ref{prop.not-R-trivial}]
 Since $E_1$, $E_2$, $E_3$ are maximal subfields of $A$,
\[ A \otimes_F E_i \simeq \Mat_2(E_i) \]
for $i= 1, 2, 3$. In other words, $A$ is split by $E/F$.  
Thus by Lemma~\ref{lem.Galois-cohomology}, 
$A$ corresponds to a class in $H^1(F, G)$,
where $G := G(E/F)$. Denote this class by $\alpha$. 

Our assumption that there exists a non-split quaternion algebra
over $F$, implies that $F$ is an infinite field; see Remark (3) in the Introduction.
Thus Lemma~\ref{lem.R-connected} applies: it suffices to show that $\alpha$ 
is not $r$-trivial.  Assume the contrary.
Using Lemma~\ref{lem.Galois-cohomology} once again, we see that
this means the following:
there exists a quaternion algebra $A(t)$ over $F(t)$ such that 

\smallskip
(a) $A(t)$ is split by $F(t) \otimes_F E$, and

\smallskip
(b) $A(t)$ is unramified at $t = 0$ and $t = 1$, 
$A(0)$ is split over $F$, and $A(1)$ is isomorphic to $A$.

\smallskip
\noindent
Here $A(0)$ and $A(1)$ denote $A(t)$ specialized to the points
$t = 0$ and $t = 1$.  We now recall the Faddeev exact sequence
\begin{equation} \label{e.faddeev}
0 \to \Br(F) \to \Br(F(t)) \to \oplus_{\eta \in \bbP_F^1}  
H^1(F_{\eta}, \bbQ/\bbZ) \, ; 
\end{equation}
see e.g.,~\cite[Corollary 6.4.6]{gs}.
For $\eta \in \bbP^1_F$ denote the image 
of the Brauer class $[A(t)] \in \Br(F(t))$ in 
$H^1(F_{\eta}, \bbQ/\bbZ)$ by $\alpha_{\eta}$.  

By property (a) above, $A(t)$ is split 
by $E_i(t) := F(t) \otimes_F E_i = F(t)(\sqrt{a_i})$ 
for $i = 1, 2, 3$.  Note that $E_i(t)$ is a field
extension of $F(t)$ of degree $2$.  Since $A(t)$ is 
a quaternion algebra over $F(t)$, $A(t)^{\otimes 2}$ 
is split over $F(t)$ and hence, $2 \alpha_{\eta} = 0$ for every $\eta \in \bbP^1$. 
In particular, every $\alpha_{\eta}$ lies in 
$H^1(F_{\eta}, \bbZ/ 2 \bbZ) \hookrightarrow 
H^1(F_{\eta}, \bbQ/ \bbZ)$.

We claim that $\alpha_{\eta}$ is the trivial class in 
$H^1(F_{\eta}, \bbZ/ 2 \bbZ) = F_{\eta}^*/(F_{\eta}^*)^2$
for every $\eta \in \mathbb P^1$.
If we can prove this claim, then the
Faddeev exact sequence~\eqref{e.faddeev} will tell us that
$A(t)$ is constant, i.e., that $A(t)$ is isomorphic to
$B \otimes_F F(t)$ over $F(t)$, for some
quaternion algebra $B$ defined over $F$.
Consequently, $A(0)$ and $A(1)$ are both isomorphic to $B$
over $F$ and hence, are isomorphic to each other. 
Since $A(0)$ is split over $F$, and
$A(1) \simeq A$ is a quaternion division algebra,
this is a contradiction, and the proof of 
Proposition~\ref{prop.not-R-trivial} will be complete.

It remains to prove the claim. Assume the contrary. 
Suppose $\alpha_{\eta} = (b)$, where $(b)$ denotes 
the class of $b \in F_{\eta}^*$ in
$H^1(F_{\eta}, \bbZ/ 2 \bbZ) = F_{\eta}^*/(F_{\eta}^*)^2$. 
Since we are assuming $\alpha_{\eta} \neq (0)$, $b$ 
is not a square in $F_{\eta}^*$. 
On the other hand, since $A(t)$ splits over $F(t)(\sqrt{a_i})$, 
$b$ becomes a square in $F_{\eta}(\sqrt{a_i})^*$ for $i = 1, 2, 3$.
This is only possible if
$F_{\eta}(\sqrt{a_i})$ is a field extension of $F_{\eta}$
of degree $2$ and 
$\sqrt{b} = f_i \sqrt{a_i}$ for some $f_i \in F_{\eta}^*$, where
$i = 1$, $2$, $3$. Equivalently,
$b = f_i^2 a_i$ or $(b) = (a_i)$ in $H^1(F_{\eta}, \bbZ/p \bbZ)$.
Since $a_1 a_2 a_3$ is a complete square in $F^*$, we conclude that
\[ \alpha_{\eta} = (b) =
(b) + (b) + (b) = (a_1) + (a_2) + (a_3) = (a_1 a_2 a_3) = 0  \]
is the trivial class in 
$H^1(F_{\eta}, \bbZ/ 2 \bbZ) = F_{\eta}^*/(F_{\eta}^*)^2$,
a contradiction. This completes the proof of the claim and thus of 
Proposition~\ref{prop.not-R-trivial} and of Theorem~\ref{thm.main}(b).
\end{proof}

\section*{ Acknowledgements}
We are grateful to J.-L.~Colliot-Th\'el\`ene, A. Massarenti,
M.~Mella, M.~Talpo and A. Vistoli for stimulating conversations.
We also thank the anonymous referee for helpful comments.


\end{document}